\documentclass[11pt]{amsart}

\usepackage{amssymb,amsmath,amscd}
\usepackage{mathtools}
\usepackage{verbatim}

\numberwithin{equation}{section}

\newtheorem{theorem}{Theorem}[section]
\newtheorem{corollary}[theorem]{Corollary}
\newtheorem{lemma}[theorem]{Lemma}
\newtheorem{proposition}[theorem]{Proposition}
\theoremstyle{definition}
\newtheorem{example}[theorem]{Example}

\newtheorem{definition}[theorem]{Definition}
\newtheorem{question}[theorem]{Question}

\newcommand{\tend}[1]{\mbox{}\nolinebreak[4]\hfill$\vartriangle$\end{#1}}
\newcommand{\N}{\mathbb{N}}		
\newcommand{\C}{\mathbb{C}}
\newcommand{\fbd}{\mathrm{fbd}}
\newcommand{\rank}{\mathrm{rank}}
\newcommand{\reg}{\mathrm{reg}}
\newcommand{\st}{\ |\ }
\newcommand{\FFS}{\phi_\mathrm{s}}

\setlength{\parskip}{1ex}

\title{On topological classification of complex mappings}

\keywords{complex analytic mapping, holomorphic, non-open, fibred power, family of fibres, vertical component, semicontinuity, singular target.}

\subjclass[2010]{Primary: 32H02 \ Secondary: 32C18, 32S60}

\author[H. Seyedinejad]{Hadi Seyedinejad}
\address{Department of Mathematics, Western University, London, Ontario \linebreak N6A 5B7, Canada}
\email{sseyedin@alumni.uwo.ca}

\begin{document}

\begin{abstract}
We study the topological invariant $\phi$ of Kwieci\'nski and Tworzewski, particularly beyond the case of mappings with smooth targets. We derive a lower bound for $\phi$ of a general mapping, which is similarly effective as the upper bound given by Kwieci\'nski and Tworzewski. Some classes of mappings are identified for which the exact value of $\phi$ can be computed. Also, we prove that the variation of $\phi$ on the source space of a mapping with a smooth target is semicontinuous in Zariski topology.
\end{abstract}

\maketitle

\section{Introduction and main results}

We study the topology of complex analytic mappings (which we simply call \emph{mappings}) by analyzing their family of fibres. Kwieci\'nski and Tworzewski \cite{KT} introduce a topological invariant $\phi=\phi(f)$  of a mapping $f:X \to Y$, defined as the supremum of numbers of arbitrarily chosen points on any given fibre of $f$ that can be simultaneously approximated by points on general fibres (see Definition~\ref{def:phi} below). They show that, if $Y$ is locally irreducible of dimension $n$, then $\phi(f)$ can only take on values $0,1,\dots,n-1$ and $\infty$, with $\phi(f)=\infty$ being equivalent to openness of $f$. 
We will provide a family of examples (Example~\ref{ex:ell}) which shows that, for every target dimension $n$, all the values of $\phi=0,\dots,n-1$ can be attained by non-open mappings.

If $Y$ is a singular space, the formula given in \cite{KT}  (see \eqref{eq:FFS} below) computes only an \emph{upper} bound for $\phi(f)$. In this paper, we present a similarly effective formula which gives a \emph{lower} bound for $\phi$:

\begin{theorem}\label{th:FFL}
Let $Y$ be a locally irreducible space contained in a space $\Upsilon$ of pure dimension $N$. Let $\Omega$ be a space of pure dimension $k$, and suppose $X \subseteq Y \times \Omega$ is a space which can be defined in $\Upsilon \times\Omega$ locally by at most $r$ holomorphic functions (i.e., every stalk of the coherent ideal of $\mathcal{O}_{\Upsilon \times \Omega}$ defining $X$ admits $r$ generators). Let $f:X \rightarrow Y$ be the projection mapping and suppose that $\phi(f)>0$. Then
$$ \phi(f) \geq \min_{\lambda \neq j \in I} \left[\frac{N-\dim f(X_j)-1}{j-(k-r)}\right],$$
where $I$ is the set of all fibre dimensions of $f$, $\lambda$ is the minimum of $I$, and for every $j \in I$, $X_j$ is the locus of all $\xi \in X$ with $\dim_\xi f^{-1}(f(\xi))=j$. (Square brackets take the integer part of the fraction.)
\end{theorem}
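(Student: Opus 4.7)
The plan is to reduce the theorem to a dimension count on the $p$-fold fibred power $X^{(p)}_Y := X \times_Y X \times_Y \cdots \times_Y X$, where $p := \min_{\lambda \neq j \in I} \left[(N - \dim f(X_j) - 1)/(j - (k-r))\right]$. This space embeds naturally in $\Upsilon \times \Omega^p$ (of pure dimension $N + kp$) and is locally cut out there by the $r$ defining functions of $X$ applied in each of the $p$ $\Omega$-factors, for a total of $rp$ equations---the shared $\Upsilon$-coordinate makes all fibre-product conditions automatic. Consequently, every local irreducible component of $X^{(p)}_Y$ has dimension at least $N + p(k - r)$, a bound that will drive the entire argument.

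Given $\xi_1, \dots, \xi_p$ on a common fibre $f^{-1}(y_0)$, I would set $\zeta_0 := (\xi_1, \dots, \xi_p)$ and pick any local irreducible component $W$ of $X^{(p)}_Y$ through $\zeta_0$. The key claim is that $f^{(p)}(W)$ is \emph{not} contained in $\overline{f(X_j)}$ for any $j \in I$ with $j > \lambda$. Supposing by contradiction that such a containment holds for some maximally chosen $j^*$, a generic $y \in f^{(p)}(W)$ lies in $f(X_{j^*})$ but in no $f(X_{j''})$ with $j'' > j^*$; so every $\eta_i \in f^{-1}(y)$ satisfies $\dim_{\eta_i} f^{-1}(y) \leq j^*$. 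Since $(f^{(p)})^{-1}(y) = (f^{-1}(y))^p$ has local dimension $\sum_i \dim_{\eta_i} f^{-1}(y) \leq p j^*$ at every one of its points, the generic relative dimension of $f^{(p)}|_W$ is at most $p j^*$. Combining the identity $\dim W = \dim f^{(p)}(W) + (\text{gen.~rel.~dim.})$ with $\dim W \geq N + p(k-r)$ and $\dim f^{(p)}(W) \leq \dim f(X_{j^*})$ produces
\[
p(j^* - (k-r)) \geq N - \dim f(X_{j^*}),
\]
which contradicts $p \leq \left[(N - \dim f(X_{j^*}) - 1)/(j^* - (k-r))\right]$.

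The key claim forces the irreducible analytic set $f^{(p)}(W)$ outside every $\overline{f(X_j)}$ with $j > \lambda$, hence outside their finite union; so $f^{(p)}(W)$ meets the generic stratum $Y \setminus \bigcup_{j > \lambda} f(X_j)$ in a set dense in $f^{(p)}(W)$ near $y_0$. Selecting $y_m \to y_0$ inside this intersection and lifting through a sufficiently small proper representative of $f^{(p)}|_W$ yields $\zeta_m = (\xi_1^{(m)}, \dots, \xi_p^{(m)}) \in W$ with $f^{(p)}(\zeta_m) = y_m$ and $\zeta_m \to \zeta_0$; the coordinate sequences $\xi_i^{(m)} \to \xi_i$ lie on the common general fibre $f^{-1}(y_m)$, proving $\phi(f) \geq p$.

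The hard part is the key claim: one must choose $j^*$ maximally so that the pointwise fibre dimension of $f$ over generic $y \in f^{(p)}(W)$ is genuinely capped by $j^*$, and then translate this pointwise bound into a bound on the generic relative dimension of $f^{(p)}|_W$ via the fibre-dimension identity on the irreducible source $W$. A secondary obstacle is the local lifting step in the analytic category, which relies on the local irreducibility of $Y$ (to keep $f^{(p)}(W)$ and its image behaviour controllable) and on the hypothesis $\phi(f) > 0$ to preclude pathological configurations near $\zeta_0$.
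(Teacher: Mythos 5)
Your dimension count reproduces the paper's: the bound $\dim X^{\{p\}}_{\zeta}\geq N+p(k-r)$ coming from the $rp$ local equations in $\Upsilon\times\Omega^{p}$, and the estimate via the Dimension Formula that any part of the fibred power lying over a stratum with fibre dimension $j^{*}>\lambda$ has dimension at most $pj^{*}+\dim f(X_{j^{*}})$, which is incompatible with $p\leq\left[\frac{N-\dim f(X_{j^{*}})-1}{j^{*}-(k-r)}\right]$. So your ``key claim'' is sound and is essentially the paper's step showing that $(X_{\lambda})^{\{p\}}$ is dense in $X^{\{p\}}$.

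The genuine gap is the final step, where you pass from this to $\phi(f)\geq p$ by a direct sequence lift. First, the lifting is unjustified: if $y_{m}\to y_{0}$ lie in $f^{\{p\}}(W)$, there is in general no way to choose preimages $\zeta_{m}$ converging to the \emph{prescribed} point $\zeta_{0}$. Already for $(x,y)\mapsto(x,xy)$ at a source point over the blown-down line, one finds sequences in the image tending to the image point with no preimages near the given source point; moreover $f^{\{p\}}(W)$ need not be analytic, no ``proper representative onto the image'' exists in general, and properness would at best force convergence to \emph{some} point of the fibre over $y_{0}$, not to $\zeta_{0}$. Second, Definition~\ref{def:phi} requires approximation by fibres over $Y\setminus B$ for an \emph{arbitrary} set $B$ with empty interior, not merely avoidance of the finitely many analytic sets $f(X_{j})$; to beat an arbitrary thin $B$ one needs images of neighbourhoods of $\zeta_{0}$ to contain open subsets of $Y$, i.e.\ an openness statement. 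Third, and relatedly, you never actually use the hypothesis $\phi(f)>0$, which is exactly where that openness comes from in the paper: since $f$ has no vertical components and $Y$ is locally irreducible, Remmert's Rank Theorem makes $f|_{X_{\lambda}}$ open; openness is preserved under fibred powers, so $(f|_{X_{\lambda}})^{\{p\}}$ is open; combined with the density of $(X_{\lambda})^{\{p\}}$ this shows no component of $X^{\{p\}}$ is vertical, and one concludes $\phi(f)\geq p$ by the fibred-power characterization of $\phi$ (Definition~\ref{def2:phi}, i.e.\ \cite[Proposition~6.2]{KT}), whose proof is precisely the nontrivial bridge from non-verticality to the pointwise approximation property. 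You must either route your argument back through that characterization (after supplying the openness step), or reprove it; your sketch does neither.
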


We will comment on the assumptions of Theorem~\ref{th:FFL} in Section~\ref{sec:sing}.

In the special case of mappings with smooth targets, as shown in \cite{KT}, the value of $\phi$ is precisely equal to the upper bound given by formula~\eqref{eq:FFS}. (We have denoted this upper bound by $\FFS$.) In the next two results, which are proved in Section~\ref{sec:special}, we provide a larger class of mappings for which $\phi=\FFS$ holds also.

\begin{corollary}\label{cor:FFL}
Let $Y$ be a locally irreducible, pure-dimensional space, $\Omega$ a space of pure dimension $k$, and
$X$ a pure-dimensional space defined in
$Y \times \Omega$ locally
by $r$ holomorphic functions, where $r=\dim (Y\times\Omega) - \dim X$. (In other words, we may say that $X$ is locally a set-theoretical complete intersection in $Y \times \Omega$).
Then, for the projection $f:X \rightarrow Y$, we have $\phi(f)=\FFS(f)$.
\end{corollary}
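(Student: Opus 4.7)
The plan is to deduce the corollary from Theorem~\ref{th:FFL} by showing that, under the complete-intersection hypothesis, the lower bound of that theorem exactly matches the Kwieciński--Tworzewski upper bound $\FFS(f)$. Since the inequality $\phi(f) \le \FFS(f)$ always holds, matching the two bounds will force $\phi(f) = \FFS(f)$.

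The key numerical identity is
$$ k - r \;=\; k - \bigl(\dim(Y\times\Omega) - \dim X\bigr) \;=\; \dim X - \dim Y. $$
Consequently, the denominators $j - (k-r)$ appearing in Theorem~\ref{th:FFL} equal $j - (\dim X - \dim Y)$, whereas the corresponding denominators in $\FFS(f)$ are $j - \lambda$, where $\lambda = \min I$ is the minimum fibre dimension of $f$. The two formulas therefore coincide term by term exactly when $\lambda = \dim X - \dim Y$, and the whole proof reduces to establishing this last identity.

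In the principal case $\phi(f) > 0$, positivity of $\phi$ forces general fibres of $f$ to be nonempty, which together with $Y$ being pure-dimensional and locally irreducible yields $\dim f(X) = \dim Y$. Each fibre $f^{-1}(y) = X \cap (\{y\}\times\Omega)$ is cut out in the $k$-dimensional slice $\{y\}\times\Omega$ by the $r$ equations defining $X$, so every nonempty fibre has dimension at least $k-r$; the standard fibre-dimension theorem for holomorphic maps of pure-dimensional sources then gives $\lambda = \dim X - \dim f(X) = \dim X - \dim Y = k-r$. Substituting $\lambda = k-r$ into Theorem~\ref{th:FFL} produces $\phi(f) \ge \FFS(f)$ term-by-term, whence equality.

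The remaining case $\phi(f) = 0$ lies just outside the hypothesis of Theorem~\ref{th:FFL} and is where I expect the principal technical obstacle. The plan there is to verify $\FFS(f) = 0$ directly from the complete-intersection structure, splitting according to whether $\dim f(X) < \dim Y$, or $\dim f(X) = \dim Y$ with $f$ non-open; in either situation one extracts a locus $X_j$ with $j > \lambda$ whose image in $Y$ has dimension at least $N-1$, collapsing the corresponding term of $\FFS(f)$ to zero. Together with the previous step, this identifies $\phi(f) = \FFS(f)$ in all cases.
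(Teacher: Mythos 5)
Your treatment of the main case $\phi(f)>0$ is exactly the paper's proof: taking $\Upsilon=Y$, $N=n=\dim Y$, $m=\dim X$, the complete-intersection hypothesis gives $k-r=m-n$, positivity of $\phi$ (i.e.\ absence of vertical components, by Definition~\ref{def2:phi} with $i=1$) gives $\lambda=m-n$, and the lower bound of Theorem~\ref{th:FFL} then coincides with \eqref{eq:FFS_p}, squeezing $\phi(f)$ in between.

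The gap is in your plan for the case $\phi(f)=0$, which is genuinely needed because Theorem~\ref{th:FFL} assumes $\phi(f)>0$. The information carried by $\phi(f)=0$ is precisely that $f$ itself has a vertical component (again Definition~\ref{def2:phi} with $i=1$); your plan never invokes this, only non-openness and the size of $\dim f(X)$, and that cannot suffice: non-open maps with $\dim f(X)=\dim Y$ realize every value $\phi=\FFS=\ell-1>0$ (Example~\ref{ex:ell}), so no argument can derive $\FFS(f)=0$ from those data alone. Moreover, the specific mechanism you propose --- a stratum $X_j$ with $j>\lambda$ and $\dim f(X_j)\ge N-1$, which kills the numerator --- is not what happens in general: a vertical component $\Sigma$ may lie over an image of codimension at least $2$ (for instance $X=\{y_1x_1+y_2x_2=0,\ y_1x_2-y_2x_1=0\}\subset\C^3_y\times\C^2_x$ is pure $3$-dimensional, cut by $r=2$ equations, and contains the vertical component $\{y_1=y_2=0\}$, whose image is a line), and you give no reason why some other stratum should have image of dimension $N-1$. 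The correct and short argument works through the denominator instead: if $j$ is the generic fibre dimension of $f$ along a vertical component $\Sigma$, then $X_j$ contains a dense open subset of $\Sigma$, so $\dim f(X_j)\ge\dim f(\Sigma)$ and $j\ge m-\dim f(\Sigma)>m-n$; the corresponding term of \eqref{eq:FFS_p} has numerator $n-\dim f(X_j)-1\le n-\dim f(\Sigma)-1$, strictly smaller than its denominator $j-(m-n)\ge n-\dim f(\Sigma)$, so its integer part is $0$ no matter how small $\dim f(\Sigma)$ is, giving $\FFS(f)=0=\phi(f)$. This is the one-line reduction the paper makes at the start of its proof ("if $f$ has a vertical component, then $\phi(f)=\FFS(f)=0$"); with it in place of your case analysis, the rest of your argument goes through.
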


\begin{proposition}\label{prop:des}
Consider a mapping  $f:X \rightarrow Y$, 
with $Y$ locally irreducible and of pure dimension.
If $Y$ admits a desingularization which is a finite mapping (in particular,
if $\dim Y=1$),
then $\phi(f)=\FFS(f)$.
\end{proposition}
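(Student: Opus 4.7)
Since $\phi(f) \leq \FFS(f)$ always holds, I need the reverse inequality, and the plan is to pull everything back along the given finite desingularization and reduce to the smooth-target case where equality is already known \cite{KT}.

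Let $\pi : \tilde Y \to Y$ denote the finite desingularization, so $\tilde Y$ is smooth. I form the fibre product $\tilde X := X \times_Y \tilde Y$ with projections $\tilde f : \tilde X \to \tilde Y$ and $\tilde \pi : \tilde X \to X$. Since $\pi$ is finite and surjective, so is $\tilde \pi$, and the base-change description gives a canonical fibrewise isomorphism $\tilde f^{-1}(\tilde y) \simeq f^{-1}(\pi(\tilde y))$ induced by $\tilde \pi$, for every $\tilde y \in \tilde Y$. Consequently $\tilde X_j = \tilde \pi^{-1}(X_j)$ and $\tilde f(\tilde X_j) = \pi^{-1}(f(X_j))$; finiteness of $\pi$ then yields $\dim \tilde f(\tilde X_j) = \dim f(X_j)$ and $\dim \tilde Y = \dim Y$. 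Reading these off the defining formula~\eqref{eq:FFS}, one obtains $\FFS(\tilde f) = \FFS(f)$. Since $\tilde Y$ is smooth, the equality of \cite{KT} then gives $\phi(\tilde f) = \FFS(\tilde f) = \FFS(f)$.

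Next I would establish $\phi(f) \geq \phi(\tilde f)$. Given $y_0 \in Y$ and distinct points $x_1, \ldots, x_m \in f^{-1}(y_0)$ with $m \leq \phi(\tilde f)$, I choose any $\tilde y_0 \in \pi^{-1}(y_0)$ and the unique lifts $\tilde x_i \in \tilde f^{-1}(\tilde y_0)$. The defining property of $\phi(\tilde f)$ produces simultaneous approximating sequences $\tilde x_i^{(n)} \to \tilde x_i$ lying in common fibres $\tilde f^{-1}(\tilde y^{(n)})$ with $\tilde y^{(n)} \to \tilde y_0$ in general position. Setting $x_i^{(n)} := \tilde \pi(\tilde x_i^{(n)})$, continuity of $\tilde \pi$ gives $x_i^{(n)} \to x_i$, and these share the fibre $f^{-1}(\pi(\tilde y^{(n)}))$. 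Because $\pi$ is finite and surjective, the preimage $\pi^{-1}(E)$ of any proper analytic subset $E \subset Y$ is a proper analytic subset of $\tilde Y$; absorbing $\pi^{-1}(E)$ into the thin set avoided by $\tilde y^{(n)}$ shows that $\pi(\tilde y^{(n)})$ may be arranged to avoid any prescribed thin subset of $Y$. Combining with the upper bound, $\FFS(f) = \phi(\tilde f) \leq \phi(f) \leq \FFS(f)$, and equality follows.

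The main obstacle I anticipate is checking that the axiomatic definition of $\phi$ (Definition~\ref{def:phi}), which is phrased through the family of fibres, is transparently compatible with this base change: one must verify that the "general fibre" notion for $\tilde f$ is transported faithfully onto that for $f$ via the finite surjection $\tilde \pi$. A minor final point is the parenthetical case $\dim Y = 1$: for a locally irreducible complex curve the normalization is a finite desingularization, so the hypothesis is automatic.
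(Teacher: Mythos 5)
Your strategy is the same as the paper's: pull $f$ back along the finite desingularization, invoke the smooth-target equality $\phi=\FFS$ upstairs, and sandwich $\phi(f)$ between $\FFS$ of the pullback and $\FFS(f)$. Where you differ is in the key inequality $\phi(\tilde f)\leq\phi(f)$: you argue directly from Definition~\ref{def:phi} by lifting the chosen points into one fibre of $\tilde f$ and pushing the approximating sequences down, whereas the paper proves the more general Lemma~\ref{p_sur} via Definition~\ref{def2:phi} (a vertical component of $X^{\{i\}}$ pulls back to a vertical component of $(Z\times_Y X)^{\{i\}}$). Your sequence argument is viable, but note one imprecision: Definition~\ref{def:phi} quantifies over \emph{arbitrary} subsets $B\subseteq Y$ with empty interior, not just proper analytic subsets $E$. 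To transport such a $B$ you need $\pi^{-1}(B)$ to have empty interior in $\tilde Y$; finiteness and surjectivity alone do not give this for non-analytic $B$, but openness of $\pi$ does (and $\pi$ is open, being finite and surjective onto the locally irreducible $Y$ --- the paper records exactly this fact).

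The genuine gap is the claim $\FFS(\tilde f)=\FFS(f)$ ``read off'' from \eqref{eq:FFS}. That formula depends not only on $\dim Y$, the fibre dimensions and $\dim f(X_p)$, which you do compare, but also on the local dimensions $h_p=\min\{\dim_\xi X \st \xi\in X_p\}$ of the \emph{source} along each piece; since $X$ is not assumed pure-dimensional in Proposition~\ref{prop:des}, you cannot fall back on \eqref{eq:FFS_p} and ignore these terms. So you must compare $\dim_{\tilde\xi}\tilde X$ with $\dim_{\tilde\pi(\tilde\xi)}X$, which is a real step: the paper obtains the needed inequality \eqref{eq:3} from the fact that $\tilde\pi$ (its $\sigma'$) is finite \emph{and open}, hence dimension-preserving at each point. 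Relatedly, \eqref{eq:FFS} is stated for an equidimensional partition into irreducible locally closed pieces, so the fibre-dimension partitions $\{X_j\}$ and $\{\tilde\pi^{-1}(X_j)\}$ must be refined by irreducible components compatibly before the termwise comparison --- this is the bookkeeping the paper carries out with $\{X_p\}$ and $\{X'_q\}$. Finally, you only need the inequality $\FFS(\tilde f)\geq\FFS(f)$ (for which $\dim \tilde f(\tilde X_j)\leq\dim f(X_j)$ and $h$ increasing under pullback suffice), not the full equality you assert; proving just that direction, as the paper does, is both easier and enough.
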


As an immediate consequence, the important result \cite[Theorem~1.1]{KT} (and, similarly, \cite[Theorem~2.2]{KT}) can be stated for a wider class of mappings than the ones with smooth targets:

\begin{corollary}
Let $f:X \to Y$ be a mapping, with $X$ and $Y$ of pure dimension $d$ and $Y$ locally irreducible, such that either $Y$ admits a finite desingularization or $f$ can be set up as in Corollary~\ref{cor:FFL}. Suppose that exactly for one $\eta_0 \in Y$ the fibre $f^{-1}(\eta_0)$ is of positive dimension. Then, for every $x$ in an open subset of $X$, we have
$$\#f^{-1}(f(x)) \geq \left[\frac{d-1}{\dim f^{-1}(\eta_0)}\right],$$
where $\#$ denotes the number of points.
\end{corollary}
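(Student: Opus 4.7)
The strategy is to reduce to the argument underlying \cite[Theorem~1.1]{KT}, which handles the smooth-target case, by first upgrading the equality $\phi(f)=\FFS(f)$ to the more general hypotheses stated and then translating a lower bound on $\phi(f)$ into the desired lower bound on generic fibre cardinality.

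First, depending on which of the two alternative hypotheses is assumed, I will invoke either Corollary~\ref{cor:FFL} or Proposition~\ref{prop:des} to obtain $\phi(f)=\FFS(f)$. In the complete-intersection setting the value of $\phi(f)$ can actually be read directly from Theorem~\ref{th:FFL}: taking $\Upsilon=Y$ (so $N=\dim Y=d$) and $r=\dim(Y\times\Omega)-\dim X = k$, we have $k-r=0$; since $f^{-1}(\eta_0)$ is the only positive-dimensional fibre, with $m:=\dim f^{-1}(\eta_0)$, the set of fibre dimensions is $I=\{0,m\}$ with $\lambda=0$, and $X_m=f^{-1}(\eta_0)$, so $\dim f(X_m)=0$. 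Theorem~\ref{th:FFL} therefore gives
$$\phi(f)\geq\left[\frac{d-1}{m}\right].$$
In the desingularization case, the same inequality follows by pulling $f$ back along the finite resolution $\tilde Y\to Y$, applying \cite[Theorem~1.1]{KT} on the smooth target $\tilde Y$, and then descending via the equality $\phi(f)=\FFS(f)$ from Proposition~\ref{prop:des}.

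Finally, I will translate the bound on $\phi(f)$ into the fibre-count statement. Since $\dim X=\dim Y=d$ with both pure-dimensional, $f$ is generically finite; its degree $e$ is the constant fibre cardinality over a Zariski open dense subset $U\subseteq X$ (in particular, $U$ is disjoint from the exceptional fibre $f^{-1}(\eta_0)$). Setting $p:=\left[\frac{d-1}{m}\right]$, I pick $p$ distinct points on the (infinite, since $m>0$) fibre $f^{-1}(\eta_0)$. By the defining property of $\phi$, these $p$ points can be simultaneously approximated by $p$ distinct points lying on a nearby general fibre, which forces $e\geq p$. Hence $\#f^{-1}(f(x))=e\geq\left[\frac{d-1}{m}\right]$ for every $x\in U$, as required.

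The main technical wrinkle will be a careful invocation of the definition of $\phi$ so as to produce $p$ genuinely distinct approximants on a single general fibre (rather than $p$ points distributed across different nearby fibres), together with dispatching the trivial boundary cases $m=d$ or $d=1$, where $\left[\frac{d-1}{m}\right]=0$ and the statement is vacuous, and the non-surjective case (where $f(X)$ is a proper closed analytic subset of $Y$ and one restricts attention to the generically finite surjection onto $f(X)$).
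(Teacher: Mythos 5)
Your overall route is the intended one: the paper treats this statement as an immediate consequence of Corollary~\ref{cor:FFL} / Proposition~\ref{prop:des} combined with the argument of \cite[Theorem~1.1]{KT}, i.e.\ $\phi(f)=\FFS(f)$, the computation $\FFS(f)=\left[\frac{d-1}{m}\right]$ (with $m:=\dim f^{-1}(\eta_0)$) from \eqref{eq:FFS_p} -- every stratum of positive fibre dimension lies in $f^{-1}(\eta_0)$, so its image is a point -- and then the approximation property of Definition~\ref{def:phi}. However, your final step contains a genuine gap: the claim that $\#f^{-1}(f(x))$ is a constant ``degree'' $e$ on some Zariski open dense subset $U\subseteq X$ is false in this generality. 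Nothing forces $f$ to be proper, and $X$ may have several components whose images are open but not dense: for instance $X=\C^2\sqcup(\Delta\times\C)$ (with $\Delta$ the unit disc) mapping to $Y=\C^2$ by the identity on the first piece and by $(s,t)\mapsto(s,st)$ on the second satisfies all hypotheses, yet the fibre count is $2$ over part of $Y$ and $1$ elsewhere, so no Zariski dense open set with constant cardinality exists (here the bound happens to be trivial, but the mechanism you rely on already breaks); moreover generic fibres can be infinite discrete sets, so a finite degree need not exist at all. Since the statement only asks for \emph{some} nonempty open subset of $X$, the correct and simpler mechanism is a contradiction argument: put $p:=\left[\frac{d-1}{m}\right]\leq\phi(f)$, let $B:=\{y\in Y \st \#f^{-1}(y)\geq p\}$, and suppose $B$ has empty interior; applying Definition~\ref{def:phi} with this $B$ and $p$ distinct points of the (infinite) fibre $f^{-1}(\eta_0)$ yields $y_j\in Y\setminus B$ whose fibres contain, for $j$ large, $p$ distinct points, a contradiction. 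Hence $B$ has nonempty interior and $f^{-1}(\mathrm{int}\,B)$ is the required open set.

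Two smaller points. In the desingularization case, your detour through pulling back and applying \cite[Theorem~1.1]{KT} to $f'$ over the resolution is not immediately licensed: a priori $f'$ has a positive-dimensional fibre over \emph{every} point of $\sigma^{-1}(\eta_0)$, so the ``exactly one positive-dimensional fibre'' hypothesis need not transfer (it does only after noting that a finite desingularization of a locally irreducible space is bijective). The detour is also unnecessary: Proposition~\ref{prop:des} already gives $\phi(f)=\FFS(f)$, and $\FFS(f)=\left[\frac{d-1}{m}\right]$ follows from \eqref{eq:FFS_p} exactly as in the complete-intersection case. Similarly, your direct appeal to Theorem~\ref{th:FFL} needs its hypothesis $\phi(f)>0$; you should tie its failure to your ``vacuous'' case, since a vertical component of $X$ must lie inside $f^{-1}(\eta_0)$, forcing $m=d$ and the trivial bound. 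The same observation, rather than restricting to a surjection onto $f(X)$ (which need not even be analytic), is the right way to dispose of the non-dominant case.
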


Lastly, we will prove that, given a mapping with a smooth target, the local values of $\phi$ (see Definition~\ref{def:phi_loc}) vary semicontinuously with respect to the (analytic) Zariski topology:

\begin{proposition}\label{prop:semi} 
Let $f: X \to Y$ be a mapping with $Y$ smooth (or, more generally, with $f$ as in Corollary~\ref{cor:FFL} or Proposition~\ref{prop:des}). For every $a \in \N$, the set $\{x \in X \st \phi_x(f) \leq a\}$ is a (closed) analytic subset of $X$. 
\end{proposition}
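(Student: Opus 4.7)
The plan is to reduce to showing Zariski upper-semicontinuity of $\FFS_x(f)$, and then to describe its level sets via the fibre-dimension stratification.

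By Corollary~\ref{cor:FFL} or Proposition~\ref{prop:des} one has $\phi(f) = \FFS(f)$; and since the hypotheses are preserved by restriction over any open $U \subseteq X$, the same equality persists for every such restriction. Passing to germs gives $\phi_x(f) = \FFS_x(f)$ for every $x \in X$, so it suffices to prove that $\{x \in X : \FFS_x(f) \leq a\}$ is analytic.

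Set $n = \dim Y$. By Remmert's semicontinuity theorem the set $X^{\geq j} := \{\xi \in X : \dim_\xi f^{-1}(f(\xi)) \geq j\}$ is a closed analytic subset of $X$, so each stratum $X_j := X^{\geq j} \setminus X^{\geq j+1}$ and its closure $\overline{X_j}$ is analytic; by Remmert's proper mapping theorem applied on relatively compact opens, $\overline{f(X_j)}$ is analytic in $Y$ with well-defined dimension $d_j$. The local value $\FFS_x(f)$ is then obtained from formula~\eqref{eq:FFS} by substituting $I_x := \{j : x \in \overline{X_j}\}$ for $I$ and $\lambda_x := \min I_x$ for $\lambda$. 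Hence $\FFS_x(f) \leq a$ if and only if there is a pair of integers $(\lambda, j)$ with $\lambda_x = \lambda$, $j \in I_x$, $j > \lambda$, and $[(n - d_j - 1)/(j - \lambda)] \leq a$. For each such fixed pair the locus is the locally closed analytic set
\[ B_{\lambda, j} = \overline{X_j} \cap \overline{X_\lambda} \setminus \bigcup_{\lambda' < \lambda} \overline{X_{\lambda'}}, \]
so $\{\FFS_x(f) \leq a\}$ is a finite union of locally closed analytic sets, hence a constructible subset of $X$.

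To finish I will show this union is closed under specialization, so that --- being constructible --- it is analytic. Given $x' \to x_0$ with $x' \in B_{\lambda, j}$, one has $x_0 \in \overline{X_j}$ immediately, while lower semicontinuity of $\lambda_x$, itself a consequence of the analyticity of the $\overline{X_{\lambda'}}$, gives $\lambda_{x_0} \leq \lambda$. Writing $\lambda' := \lambda_{x_0}$, the enlarged denominator $j - \lambda' \geq j - \lambda$ shows the bracketed bound at $x_0$ is no larger, so $x_0 \in B_{\lambda', j}$ still satisfies the required inequality. The delicate part will be the borderline cases --- $n - d_j - 1 < 0$, or $f$ becoming open along a newly accessible stratum at the limit point --- where formula~\eqref{eq:FFS} must be re-interpreted; direct inspection should confirm the specialization argument in those cases as well.
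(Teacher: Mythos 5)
Your opening reduction is sound: under any of the stated hypotheses the equality $\phi=\FFS$ persists on open subsets of $X$ (for Corollary~\ref{cor:FFL} one should shrink to product-type neighbourhoods, which suffices because $\phi(f|_{U'})\geq\phi(f|_U)$ for open $U'\subseteq U$), so $\phi_x(f)=\sup_U\FFS(f|_U)$. The genuine gap is your next step, the claimed description of this local value in terms of the coarse strata $X_j$, the \emph{global} image dimensions $d_j$, and $\lambda_x$. It fails on two counts. First, $\FFS(f|_U)$ involves $\dim f(X_j\cap U)$, which can be strictly smaller than $d_j$ for every small $U\ni x$, since $X_j$ need not be irreducible. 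Concretely, take $Y=\C^2$, $\Omega=\C$, $X=\{y_1\bigl((y_1-1)x+y_2\bigr)=0\}$ with $f$ the projection: here $X_1=\{y_1=0\}\cup\bigl(\{(1,0)\}\times\C\bigr)$ and $d_1=1$, so at a point $\xi_0$ lying over $(1,0)$ your recipe gives the bracket $[(2-1-1)/1]=0$; but near $\xi_0$ the map is the blow-down of the smooth surface $\{(y_1-1)x+y_2=0\}$, whose positive-dimensional-fibre stratum maps to the single point $(1,0)$, so $\phi_{\xi_0}=[(2-0-1)/1]=1$. Thus for $a=0$ your set contains $\xi_0$, whereas the true set $\{\phi_x\leq 0\}$ is the vertical component $\{y_1=0\}$. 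Second, the denominator in \eqref{eq:FFS}--\eqref{eq:FFS_p} is $j-(h_p-n)$ (resp.\ $j-(m-n)$) with admissibility condition $j>h_p-n$, not $j-\lambda$ with $j>\lambda$; these agree only where no vertical components are present nearby (and where $X$ is locally pure-dimensional, which may fail in the setting of Proposition~\ref{prop:des}). For $X=\{(y,x)\in\C^2\times\C \st y=0\}\to\C^2$ there is a single stratum with $j=\lambda=1$, your minimum runs over the empty set, and every point is wrongly excluded from $\{\phi_x\leq a\}$, although $\phi_x=0$ identically and the correct formula (denominator $1-(1-2)=2$) does give $0$. These are exactly the ``borderline cases'' you defer to ``direct inspection''; they are not marginal, they are where the content of the statement lies.

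The repair requires a finer decomposition than the fibre-dimension strata: pass to irreducible pieces (refined so that $\dim_\xi X$ is constant on each piece). For an irreducible piece the dimension of the image does not drop under shrinking (constancy of the generic rank), so each piece carries a single well-defined bracket, and $\{\phi_x\leq a\}$ becomes the locally finite union of the closures of the pieces whose bracket is at most $a$, hence analytic outright --- no constructibility-plus-limit-points detour is needed (that detour is salvageable in principle, since the closure of a locally closed analytic set is analytic, but your appeal to Remmert's proper mapping theorem to make $\overline{f(X_j)}$ analytic is unjustified, as $f$ is not proper over relatively compact opens; only the dimension of the image is meaningful there). Finally, note that this preprint omits the paper's own proof of the proposition (see the footnote in Section 5 referring to the published version), so no line-by-line comparison is possible; judged on its own terms, your argument as written proves the analyticity of a set that is not $\{x\st\phi_x(f)\leq a\}$.
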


For the basics about complex analytic spaces and mappings, we suggest \cite{F} and \cite{L}.

\subsubsection*{Some conventions.} We always consider the strong topology of complex analytic spaces (namely, the one induced by Euclidean topology of the local models), unless by mentioning \emph{Zariski topology} we explicitly indicate that the analytic Zariski topology is being considered (namely, the topology defined by taking (Euclidean closed) analytic subsets as closed sets). Since our study concerns only the topological structure, we may always assume that our spaces are reduced. So, in particular, by an irreducible component of a space $X$, we mean an isolated one (i.e., one which is not included in another irreducible component of $X$). Since the nature of our study, and of the invariant $\phi$ in particular, is indeed local, we might implicitly take for granted that, for instance, the number of irreducible components of a space is finite. We clarify that $\N=\{0,1,2,\dots\}$.  

\section{The invariant $\phi$}

We recall a precise definition of $\phi$ (from \cite{KT}) as follows:

\begin{definition}\label{def:phi}
For a mapping $f:X \to Y$, $\phi(f)$ is the supremum of all integers $i \geq 1$ with the following property: for every $\xi \in X$, every $x_1,\dots,x_i$ taken on the fibre $f^{-1}(f(\xi))$, and every subset $B \subseteq Y$ with empty interior, there exist a sequence $\{y_j\}_j$ in $Y \backslash B$, with $y_j \to f(\xi)$, and sequences $\{x_{k,j}\}_j$, $k=1,\dots, i$, such that $x_{k,j} \in f^{-1}(y_j)$ and $x_{k,j} \to x_k$. If no such $i$ exists, then we set $\phi(f)=0$.
\end{definition}

Equivalently (\cite[Proposition~6.2]{KT}), $\phi$ can be defined also as the following:

\begin{definition} \label{def2:phi}
For a mapping $f:X \to Y$, $\phi(f)$ is the largest integer $i \geq 1$ such that the $i$-fold fibred power of $f$, denoted by $f^{\{i\}} :X^{\{i\}} \to Y$, contains no vertical components, and as $\phi(f)=0$ if no such $i$ exists. (We recall that an irreducible component of the source space of a mapping is called \emph{vertical} if its image has empty interior in the target.)
\end{definition}

The latter definition is mainly useful for explicit calculations, particularly considering that vertical components in some important cases (such as the case of algebraic mappings) correspond to torsion elements. (See \cite{ABM}, \cite{A}, and \cite{AS}, besides \cite{KT}, for effective methods of testing for openness in terms of vertical components or torsion elements.)

The formula given in \cite{KT} for estimation of $\phi$ is as follows: Let $f:X \to Y$ be a mapping with $Y$ locally irreducible and of pure dimension $n$. Consider an \emph{equidimensional partition} $\{X_p\}_p$ of $X$ with respect to $f$, which is a locally finite partition of $X$ such that each $X_p$ is a non-empty irreducible (locally closed) subspace of $X$, and such that the restriction $f|_{X_p}$ is an equidimensional mapping (i.e., a mapping whose non-empty fibres are all of pure and the same dimension). Then, $\phi(f)$ is bounded above by 
\begin{equation}\label{eq:FFS}
 \FFS(f) \coloneqq \min_p\{\left[\frac{n-\dim f(
X_{p})-1}{\fbd f|_{X_{p}}-(h_p-n)}\right]
\ \st \ \fbd f|_{X_{p}}> h_p-n \},
\end{equation}
where $\fbd$ stands for fibre dimension, and $h_p=\min\{\dim_\xi X \st \xi \in X_p
\}$. It is proved (\cite[Theorem~3.5]{KT}) that if $Y$ is smooth, then $\phi(f)=\FFS(f)$.

Formula~\eqref{eq:FFS} becomes somewhat simpler if we know that $X$ is of
pure dimension $m$. In such a case, it suffices to first obtain the \emph{partition of $X$ defined by fibre dimensions of $f$}; namely $\{X_j\}_j$, where $X_j$, for every fibre dimension $j$ of $f$, is the locus of all points $\xi \in X$ such that the fibre dimension $\fbd_\xi f$ of $f$ at $\xi$ is equal to $\dim_\xi f^{-1}(f(\xi))=j$. In this case,
\begin{equation}\label{eq:FFS_p}
\FFS(f) =\min_j\{\left[\frac{n-\dim f(
X_{j})-1}{j-(m-n)}\right]
\ \st \ j> m-n \}.
\end{equation}

The following example shows that the classification of non-open mappings by their Kwieci\'nski-Tworzewski invariant $\phi$ is not void; that is, for every target dimension $n$ and every integer $0 \leq i \leq n-1$, there exists a mapping $f:X \to Y$ with $\dim Y=n$ such that $\phi(f)=i$.

\begin{example}\label{ex:ell}
Choose $n,\ell \in \N$, with $1 \leq \ell \leq n$. 
Let $X$ be the analytic subset of $\C^{2n+1}$ with
coordinates $(y_1,\dots,y_n,x_1,\dots,x_{n+1})$, defined by
$$y_1x_1+\cdots+y_\ell x_\ell+x_{n+1}^2=0 \quad \text{and} \quad
y_2x_1+\cdots+y_\ell x_{\ell-1}+y_1 x_\ell=0.$$
(If $\ell=1$, consider the second equation as $y_1x_1=0$.)

Set $Y=\C^n$ with coordinates
$(y_1,\dots,y_n)$. Define $f:X \rightarrow Y$ as
the projection. We claim that $\phi(f)=\ell-1$.

First, we justify that $X$ is of pure dimension $2n-1$, so that we may use \eqref{eq:FFS_p}. If $\ell=1$, then
$X$ is just a union of two ($2n-1$)-dimensional spaces (one of which is clearly vertical over $Y$). So, suppose
that $\ell>1$.\\
For $y\in Y=\C^n$, define $D_y=\begin{bmatrix}
y_1 & \cdots & y_{\ell-1} & y_\ell\\
y_2 & \cdots & y_\ell & y_1
\end{bmatrix}$, and set 
$A=\big\{(y,x) \in X \st \rank D_y < 2 \big\}$. 
We have \label{sym:st}
\begin{align*}
A & =  \big\{ (y,x) \in X \st  \exists\, c \in \C\backslash\{0\} 
\text{\small \ s.t.\ } 
(y_1,\dots,y_\ell)=c(y_2,\dots,y_\ell,y_1)\big\} \\
& \quad \cup \big\{(y,x) \in X \st y_1=\cdots=y_\ell=0\big\} \\
& =  \big\{(y,x) \in X \st 
\exists\, c \in \C\backslash\{0\} \text{\small \ s.t.\ } y_1=c^\ell y_1,
y_2=c^{\ell-1}y_1,\dots,y_\ell = cy_1\big\}\\
& \quad \cup \big\{(y,x) \in X \st y_1=\cdots=y_\ell=0\big\}\\
& = \bigcup_{\substack{c \in \C , c^\ell=1}}
\big\{(y,x) \in X \st y_2=c^{\ell-1}y_1,\dots,y_\ell = cy_1
\big\}.
\end{align*}
By considering the defining equations of $X$, we get
$$
A = \bigcup_{c\in\C,\, c^\ell=1}
\big\{(y,x) \in \C^{2n+1} \st \parbox[t]{7cm}{
$y_2=c^{\ell-1}y_1,\dots,y_\ell = cy_1,$\\
$y_1x_1+c^{\ell-1}y_1x_2+\cdots+cy_1x_\ell+x_{n+1}^2=0,$\\
$c^{\ell-1}y_1x_1+\cdots+cy_1x_{\ell-1}+y_1x_{\ell}=0\big\}.$}
$$
Multiplying the third equation by $c$ gives
$$
A = \bigcup_{c\in\C,\, c^\ell=1}
\big\{(y,x) \in \C^{2n+1} \st \parbox[t]{7cm}{
$y_2=c^{\ell-1}y_1,\dots,y_\ell = cy_1,$\\
$ y_1x_1+c^{\ell-1}y_1x_2+\cdots+cy_1x_\ell+x_{n+1}^2=0,$\\
$ y_1x_1+c^{\ell-1}y_1x_2+\cdots+cy_1x_{\ell}=0\big\},$}
$$
which simplifies to
\begin{equation}\label{eq:A}
A = \bigcup_{c\in\C,\, c^\ell=1}
\big\{(y,x) \in \C^{2n+1} \st \parbox[t]{6cm}{
$y_2=c^{\ell-1}y_1,\dots,y_\ell = cy_1,$\\
$x_{n+1}=0,$\\
$y_1(x_1+c^{\ell-1}x_2+\cdots+cx_{\ell})=0\big\}.$}
\end{equation}
It is now easily seen that $\dim A = 2n-\ell$. Write $X=A\cup
(X \backslash A)$. Since $X$ is defined by two equations, we have
$\dim_\xi X \geq 2n+1-2=2n-1$ for every $\xi \in X$. On the other
hand, $\dim A = 2n-\ell < 2n-1$, and hence $\dim A < \dim_\xi X$
for every $\xi \in X$. 
We conclude that $A$ is a nowhere-dense subset of
$X$.

Take a point $(\eta,\xi) \in X \backslash A$, where $\eta\in\C^n_y$ and $\xi\in
\C^{n+1}_x$. Since $\rank D_\eta=2$,
there is a nonsingular submatrix $\begin{bmatrix}
\eta_i & \eta_j\\
\eta_{\sigma(i)} & \eta_{\sigma(j)} \end{bmatrix}$ for some $i,j$,
$1 \leq i<j \leq \ell$, where $\sigma$ is the permutation
$\begin{pmatrix} 1 & 2 & \cdots & \ell \end{pmatrix}$. It follows
that we can solve the defining equations of $X$ in a neighbourhood
of $(\eta,\xi)$ in $X \backslash A$ for $x_i$ and $x_j$. Hence,
$\dim_{(\eta,\xi)} (X \backslash A) = 2n+1-2=2n-1$, and in particular,
$X \backslash A$ is of pure dimension $2n-1$. It follows that $X$ is of pure dimension $2n-1$.

Next, we need to find the partition of $X$ defined by fibre dimensions of $f$. Take a point $\eta \in Y=\C^n$. If $\rank D_\eta=2$,
then from
defining equations of $X$, we get  
$\dim f^{-1}(\eta)= n-1$.
If $\rank D_\eta < 2$ and $\eta_1 \neq 0$,
then by \eqref{eq:A}, we get $f^{-1}(f(\eta))=\{
(\eta,x) \in \C^{2n+1} \st x_{n+1}=0,
x_1+c^{\ell-1}x_2+\cdots+cx_{\ell}=0\}$. So again, 
$\dim f^{-1}(\eta) = n-1$.
Finally assume $\rank D_\eta < 2$ and
$\eta_1=0$, or equivalently by \eqref{eq:A}, 
$\eta_1=\cdots=\eta_\ell=0$. In this case,
$f^{-1}(f(\eta))=\{(\eta,x) \in \C^{2n+1} \st x_{n+1}=0\}
$, and hence $\dim f^{-1}(\eta)=n$.
Thus, there is only one
non-generic fibre locus to consider: $X_n$, with 
$f(X_n)
=\{\eta_1=\cdots=\eta_\ell=0\}$. One then calculates by \eqref{eq:FFS_p} that
$$\phi(f)=\FFS(f)=\left[\frac{n-(n-\ell)-1}{n-(2n-1-n)}\right]=\ell-1.$$
\tend{example}

\section{Mappings with singular targets}\label{sec:sing}

Theorem~\ref{th:FFL} is the subject of this section. We remark, first, that the assumption of $\phi(f)>0$ is not an important restriction in Theorem~\ref{th:FFL}. Indeed, if $\phi(f)=0$, then the mapping $f$ has a vertical component, the upper bound $\FFS(f)$ readily gives the value of zero, and there will be no need for a lower bound. Second, the special setting of Theorem~\ref{th:FFL} can be established (locally) for any mapping $f:X \to Y$, for example, by embedding $X$ (after shrinking if needed) into some $\Omega=\C^k$, replacing $X$ by the graph $\Gamma_f$ of $f$ in $Y \times X$, and replacing $f$ by the projection $\Gamma_f \to Y$. An (open) question immediately follows:

\begin{question}
Is there a systematic method of setting up (according to assumptions of Theorem~\ref{th:FFL}) a given mapping $f$  such that the best lower bound (and possibly the exact value) for $\phi(f)$ is obtained by Theorem~\ref{th:FFL}?
\tend{question} 

\begin{proof}[Proof of Theorem~\ref{th:FFL}]
For every $\zeta\in X^{\{i\}}$, $i \geq 1$, the germ $X^{\{i\}}_{\zeta}$ will be defined in $(\Upsilon \times \Omega^i)_{\zeta}$ by at most $ir$ equations, according to the underlying set of fibred product. Hence, by estimating the dimension of intersection, for every $\zeta \in X^{\{i\}}$ we can write 
\begin{equation}\label{eq:mi.d}
\dim X^{\{i\}}_{\zeta} \geq \dim(\Upsilon \times \Omega^i)_\zeta-ir = N+i(k-r).
\end{equation}

By considering again the set-theoretical structure of fibred product, write 
$$
X^{\{i\}}=\bigcup_{(j_1,\dots,j_i)}
X_{j_1} \times_Y \cdots \times_Y X_{j_i},
$$ 
where the union is over all $(j_1,\dots,j_i) \in \N^i$ such that $j_1,\dots,j_i$ are fibre dimensions of $f$. Denote by $j_0$ the maximum of $j_1,\dots,j_i$, and consider the projection
$\pi:X_{j_1} \times_Y \cdots \times_Y X_{j_i} \rightarrow X_{j_0}$. Since $\fbd_\zeta (f\circ\pi) \leq ij_0$ for every $\zeta \in X_{j_1} \times_Y 
\cdots \times_Y X_{j_i}$, by the Dimension Formula (see, e.g., \cite[section~V.3]{L}) we get
\begin{equation}\label{eq:ma.d}
\dim(X_{j_1} \times_Y \cdots \times_Y X_{j_i}) \leq ij_0 +\dim f(X_{j_0}).
\end{equation}

Suppose $i \geq 1$ is such that for every fibre dimension $j > \lambda$ (where $\lambda$ is the minimal fibre dimension of $f$) we have 
\begin{equation}\label{eq:ij<}
ij+\dim f(X_j) < N+i(k-r).
\end{equation}
Then, \eqref{eq:mi.d}, \eqref{eq:ma.d}, and \eqref{eq:ij<} imply that $\dim(X_{j_1} \times_Y \cdots \times_Y X_{j_i}) < \dim X^{\{i\}}_{\zeta}$ for every $(j_1,\dots,j_i) \neq (\lambda,\dots,\lambda)$ and  every $\zeta \in X^{\{i\}}$. This implies that $(X_{j_1} \times_Y \cdots \times_Y X_{j_i})_{\zeta}$ is a nowhere-dense subgerm of $X^{\{i\}}_{\zeta}$ for every $(j_1,\dots,j_i) \neq 
(\lambda,\dots,\lambda)$ and  every $\zeta\in X^{\{i\}}$. Now, take a point $\zeta\in X^{\{i\}}$, and write 
$$
X^{\{i\}}_{\zeta}=\Big((X_{\lambda})^{\{i\}}\Big)_{\zeta}
\cup \Big(\bigcup_{(j_1,\dots,j_i) \neq (\lambda,\dots,\lambda)} X_{j_1} \times_Y \cdots \times_Y X_{j_i}\Big)_{\zeta}.
$$
As the second summand is the germ of a locally finite union of nowhere-dense subsets, it follows that $\Big((X_{\lambda})^{\{i\}}\Big)_{\zeta}$ is a dense subgerm of $X^{\{i\}}_{\zeta}$. On the other hand, $f|_{X_{\lambda}}$ is an open mapping (by Remmert's Rank Theorem (see, e.g., \cite[section~V.6]{L}), the fact that $f$ has no vertical components, and local irreducibility of $Y$). Then, since openness is preserved under pulling back, it follows by induction that $(f|_{X_{\lambda}})^{\{i\}} : (X_{\lambda})^{\{i\}} \to Y$ is an open mapping. Thus, $X^{\{i\}}_{\zeta}$ cannot have any vertical components over $Y$. We thus showed that for any $i \geq 1$ such that \eqref{eq:ij<} holds for every $j > \lambda$, $X^{\{i\}}_{\zeta}$
has no vertical components for every $\zeta \in X^{\{i\}}$, and hence $X^{\{i\}}$ has no vertical components. But \eqref{eq:ij<} is equivalent to
$$
i \leq \frac{N-\dim f(X_j)-1}{j-(k-r)}
$$
for every $j>\lambda$. (Note that if $j>\lambda$, then we have $j>k-r$, since by estimation of codimension applied to a fibre, $\lambda \geq \dim\Omega-r=k-r$). The result follows now by Definition~\ref{def2:phi}.
\end{proof}

In the following example, we apply Theorem~\ref{th:FFL} to calculate the exact value of $\phi(f)$.

\begin{example} \label{ex:phi=FFS}
Set
\begin{eqnarray*}
Y
& = & \{y\in\C^4 \st y_1y_4-y_2y_3=0\},\\
X
& = & \{(y,x)\in Y \times \C \st y_1x^2+y_4x+y_2-y_3=0\},
\end{eqnarray*}
and define $f:X \rightarrow Y$ as the projection.

Take a point $\eta \in Y$. If $\eta_1 \neq 0$ or $\eta_4 \neq 0$, then
$f^{-1}(\eta)$ is either a singleton or a pair of points. 
If $\eta_1=
\eta_4=0$, then (by the defining equations of $Y$)
$\eta_2\eta_3=0$, and in order
to get a non-empty fibre, we should have (by the defining equations
of $X$) 
$\eta_3=\eta_4=0$, and so $\eta=0$ and the fibre will be $\C_x$.
Thus, $\{X_0,X_1\}$ is the partition of $X$ defined by fibre dimensions of $f$. We have $\dim f(X_1)=0$.

The space $X$ has pure dimension $3$. By \eqref{eq:FFS_p}, we get 
$$\FFS(f)=\left[\frac{3-0-1}{1-(3-3)}\right]=2.$$
Thus, $\phi(f) \leq 2$.

Now, apply Theorem~\ref{th:FFL} by setting $\Upsilon=Y$, $N=3$, $\Omega=\C$, $k=1$, and $r=1$. We get 
$$\phi(f) \geq \left[\frac{3-0-1}{1-(1-1)}\right]=2.$$

Thus, $\phi(f)=2$.
\tend{example}

\section{Special classes of mappings} \label{sec:special}
In this section, we give the proofs of those results that identify classes of mappings for which we have $\phi=\FFS$.

\begin{proof}[Proof of Corollary~\ref{cor:FFL}]
We can assume that $f$ has no vertical components, as otherwise we have $\phi(f)=\FFS(f)=0$. In the context of Theorem~\ref{th:FFL}, set $\Upsilon=Y$, $n \coloneqq N =\dim Y$, and $m \coloneqq \dim X$. Then, by the Dimension Formula, we have $\lambda=m -n$, where $\lambda$ is the minimum fibre dimension of $f$. On the other hand, by assumption, we have $k-r=m-n$. Now, compare the formula of our lower bound in Theorem~\ref{th:FFL} and formula~\eqref{eq:FFS_p}, to conclude that the former is equal to the latter, and $\phi(f)$ being in between, it follows that $\phi(f)=\FFS(f)$.
\end{proof}

Next, we go over the proof of Proposition~\ref{prop:des}. We need a lemma first.

\begin{lemma}\label{p_sur}
Consider mappings $ f: X \to Y$ and $\sigma:Z \to Y$, and let $ f' : Z \times_Y X \to Z$ be the pullback of $ f$ by $\sigma$. If $\sigma$ is surjective and has no vertical components, then $\phi( f') \leq \phi( f)$.
\end{lemma}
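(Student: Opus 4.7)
The plan is to use Definition~\ref{def2:phi} and argue by contraposition: for each $i\geq 1$, I will show that if $f^{\{i\}}$ has a vertical component then so does $(f')^{\{i\}}$. To set this up, I first note the set-theoretic identification $(Z\times_Y X)^{\{i\}}=Z\times_Y X^{\{i\}}$, under which $(f')^{\{i\}}$ is just the first projection $(z,\vec x)\mapsto z$, while the second projection lands in $X^{\{i\}}$.

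A preliminary observation that I will invoke is that if $B'\subseteq Y$ has empty interior, then so does $\sigma^{-1}(B')\subseteq Z$. Indeed, if an open $U\subseteq Z$ were contained in $\sigma^{-1}(B')$, then $U$ would meet some (isolated) irreducible component $Z_\alpha$ of $Z$ in a nonempty open set; since $\sigma$ has no vertical components, $\sigma(Z_\alpha)$ has nonempty interior in $Y$, and by the rank theorem $\sigma|_{Z_\alpha}$ is locally open at the generic smooth points of $Z_\alpha$. Hence $\sigma(U)$ would contain a nonempty open subset of $Y$, contradicting empty interior of $B'$.

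Now suppose $V$ is a vertical component of $X^{\{i\}}$, so $f^{\{i\}}(V)$ has empty interior. Consider the closed subspace $Z\times_Y V\subseteq Z\times_Y X^{\{i\}}$; its image under $(f')^{\{i\}}$ is $\sigma^{-1}(f^{\{i\}}(V))$, which has empty interior by the preliminary observation. It remains to extract a genuine vertical irreducible component of $(Z\times_Y X)^{\{i\}}$ from $Z\times_Y V$. I would pick a point $v\in V$ lying on no other irreducible component of $X^{\{i\}}$; such $v$ exists since $V$ is an isolated component, and then some open neighbourhood of $v$ in $X^{\{i\}}$ is contained in $V$. Using surjectivity of $\sigma$, pick any $z\in\sigma^{-1}(f^{\{i\}}(v))$, so that $(z,v)\in Z\times_Y V$ and near $(z,v)$ the fibred product $Z\times_Y X^{\{i\}}$ coincides with $Z\times_Y V$. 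Any irreducible component $W$ of $Z\times_Y X^{\{i\}}$ through $(z,v)$ is therefore locally contained in $Z\times_Y V$, and by irreducibility of $W$ together with closedness of $Z\times_Y V$ this containment extends to $W\subseteq Z\times_Y V$ globally. Consequently $W$ is a vertical component of $(Z\times_Y X)^{\{i\}}$, completing the contrapositive and yielding $\phi(f')\leq\phi(f)$.

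The step I expect to be most delicate is the final promotion from a local to a global containment $W\subseteq Z\times_Y V$: it relies on the principle that an irreducible analytic set meeting a closed analytic set in an open subset of itself must lie entirely in that closed set. Once this is in place, the desired inequality follows immediately from Definition~\ref{def2:phi}.
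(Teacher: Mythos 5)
Your proposal is correct and follows essentially the same route as the paper: both rest on Definition~\ref{def2:phi}, the identification $(Z\times_Y X)^{\{i\}}=Z\times_Y X^{\{i\}}$, the fact that absence of vertical components of $\sigma$ makes $\sigma^{-1}$ of an empty-interior set have empty interior, and the extraction of a vertical component of the pullback lying over the vertical component of $X^{\{i\}}$. The only (harmless) variation is in that last step: you localize at a point of $V$ off the other components so that a whole component $W$ of the pullback is contained in $Z\times_Y V$, whereas the paper picks a component meeting $(\sigma')^{-1}(\reg(\Sigma))$ and then upgrades "an open subset of $\Sigma'$ has image of empty interior" to the same statement for all of $\Sigma'$ by irreducibility.
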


\begin{proof}
Set $i \coloneqq \phi(f)+1$, so that the $i$-fold fibred power $ f^{\{i\}} : X^{\{i\}} \to Y$ has a vertical component, say $\Sigma$. Let $\sigma'$ be the pullback of $\sigma$ by $ f^{\{i\}}$, and let $( f^{\{i\}})'$ be the pullback of $ f^{\{i\}}$ by $\sigma$. We get the following Cartesian square:
$$\begin{CD}
Z \times_Y X^{\{i\}} @>{\sigma'}>> X^{\{i\}}\\
@VV{( f^{\{i\}})'}V @VV{ f^{\{i\}}}V\\
Z @>{\sigma}>> Y
\end{CD}$$
Suppose $\sigma$ is surjective. Then the pullback $\sigma'$ is surjective, and so $(\sigma')^{-1}(\reg(\Sigma))$ is a non-empty open subset of $Z \times_Y X^{\{i\}}$, where $\reg(\Sigma)$ denotes the regular locus of $\Sigma$. Let $\Sigma'$ be an irreducible component of $Z \times_Y X^{\{i\}}$ with a non-empty intersection with $(\sigma')^{-1}(\reg(\Sigma))$. Then $\reg(\Sigma') \cap (\sigma')^{-1}(\reg(\Sigma))$ is a non-empty open subset of $\Sigma'$, which is mapped into the set $\sigma^{-1}( f^{\{i\}}(\Sigma))$ by $( f^{\{i\}})'$. Suppose now $\sigma$ has no vertical components, which implies that the inverse image of a set with empty interior by $\sigma$ has empty interior. Therefore, $\sigma^{-1}( f^{\{i\}}(\Sigma))$ has empty interior in $Z$, as by verticality of $\Sigma$, $ f^{\{i\}}(\Sigma)$ has empty interior in $Y$. Now that an open subset of $\Sigma'$ has an image with empty interior in $Z$, we conclude (by irreducibility of $X$ and thus pure dimensionality of its image) that the whole $\Sigma'$ should have such image. Note that $( f^{\{i\}})'$, by the universal properties of fibred product, is equivalent to $( f')^{\{i\}}$. Thus, $( f^{\{i\}})'$ has a vertical component. By Definition~\ref{def2:phi}, we get $\phi( f') <i$, and thus $\phi( f') \leq \phi( f)$.
\end{proof}

\begin{proof}[Proof of Proposition~\ref{prop:des}]
Let $\sigma: Z \rightarrow Y$
be a finite desingularization of $Y$. Here, by local irreducibility of $Y$, one easily verifies that
$\sigma$ is an open mapping.
Let $ f':Z \times_Y X \rightarrow Z$ be the pullback of
 $ f$
by $\sigma$, and $\sigma':Z \times_Y X \rightarrow X$ the pullback of
$\sigma$ by $ f$. 
Since $\sigma$ is surjective and has no vertical
components, by Lemma~\ref{p_sur}
we get $\phi( f') \leq \phi( f)$.
We know that $\phi( f) \leq \FFS( f)$, and
$Z$ being smooth, we have $\FFS( f')=\phi( f')$. Altogether,
$$\FFS( f')=\phi( f')\leq\phi( f)\leq
\FFS( f).$$
To conclude the Proposition, it suffices to show that
$\FFS( f') \geq \FFS( f)$. 

Let $\{X_j\}_j$ be the partition of $X$ defined by fibre dimensions $j$ of $f$. Note that $\sigma'$ is surjective, for it is a pullback of the surjective mapping $\sigma$. Then observe that, for every $\xi \in X$, the fibre of $f$ through
$\xi$ is isomorphic to the fibre of $f'$ through every $\xi'\in
(\sigma')^{-1}(\xi)$. Therefore, $\{(\sigma')^{-1} (X_j )\}_j$ will be the partition of $Z \times_Y X$
defined by fibre dimensions $j$ of $f'$. By decomposing $X_j$ into irreducible components $\bigcup_i X_{j,i}$, a rank partition $\{X_{j,i} \backslash \bigcup_{k=1}^{i-1} X_{j,k}\}_{j,i}$ of $X$ with respect to $f$ is constructed; denote it $\{X_p\}_p$. In a similar way, construct a rank partition out of $\{(\sigma')^{-1} (X_j )\}_j$; denote it $\{X'_q\}_q$. By \eqref{eq:FFS}, we have
\begin{equation}\label{eq:4}
\FFS(f')=\left[ \frac{\dim Z - \dim f'(X'_q)-1}{j -(h_q-\dim Z)} \right]
\end{equation}
for some $X'_q$, with $\fbd f'|_{X'_q}=j$. Note that, by construction, $X'_q$ is mapped by $\sigma'$ into some $X_p$, with 
\begin{equation}\label{eq:1}
\fbd f|_{X_p}=j. 
\end{equation}
Then $\sigma(f'(X'_q)) \subseteq f(X_p)$, and, as a result of finiteness of $\sigma$, we get
\begin{equation}\label{eq:2}
\dim f'(X'_q) \leq \dim f(X_p).
\end{equation}
Note also that $\sigma'$ is an open, finite mapping (for $\sigma$ is so), and thus $\dim_\xi (Z \times_Y X) = \dim_{\sigma'(\xi)} X$ for every $\xi \in Z \times_Y X$. In particular, 
\begin{equation}\label{eq:3}
h_q=\min\{\dim_\xi  Z \times_Y X \st \xi \in X'_q\} \geq h_p=\min\{\dim_\xi X \st \xi \in X_p\}.
\end{equation}
Now, by \eqref{eq:4}--\eqref{eq:3} and \eqref{eq:FFS}, it follows that $\FFS(f') \geq \FFS(f)$.
\end{proof}

\section{Semicontinuity of $\phi$}

\begin{definition}\label{def:phi_loc}
Let $f:X \to Y$ be a mapping. For an $\xi \in X$, we define \mbox{$\phi_\xi (f) \coloneqq \sup_U \phi(f|_U)$}, where the supremum is over all open subsets $U$ of $X$ containing $\xi$.
\tend{definition}

We finish the paper with the proof of lower-semicontinuity of $\phi$ in Zariski topology.\footnote{The rest of this section is not included in this preprint. For a full version of this article please see \emph{Annales Polonici Mathematici} \textbf{114} (2015), 207--217.}

\subsection*{Acknowledgements} This research was initiated and partially developed in the frame of the doctoral study of the author at Western University. The author sincerely thanks here his PhD supervisor, Professor Janusz Adamus, for his exceptional guidance and kind support during that period, and also, for his valuable comments regarding the organization of this paper.

\end{document}